   \def\MR#1{}
\newtheorem{theorem}{Theorem}
\newtheorem{prop}[theorem]{Proposition}
\newtheorem{corollary}[theorem]{Corollary}
\theoremstyle{definition}
\newtheorem*{def*}{Definition}
\newtheorem{remark}[theorem]{\textbf{Remark}}
\theoremstyle{remark}
\newtheorem{claim}{\textsc{Claim}}
\newtheorem*{claim*}{\textsc{Claim}}
\DeclareMathSymbol{\widehatsym}{\mathord}{largesymbols}{"62}
\renewcommand{\rho}{\varrho}
\providecommand{\NNb}{\mathbb{N}}
\begin{document}
\title{On the iterates of shifted Euler's function}

\author{Paolo Leonetti}
\address{Department of Economics, Universit\`a degli Studi dell'Insubria, via Monte Generoso 71, 21100 Varese, Italy}
\email{leonetti.paolo@gmail.com}

\author{Florian Luca}
\address{School of Mathematics, University of the Witwatersrand, Private Bag 3, Wits 2050, South Africa}
\email{Florian.Luca@wits.ac.za}

\subjclass[2020]{Primary 11A25, 11B37. Secondary 97I30.}

\keywords{Nonlinear recurrence sequences; periodic sequences; Euler's function; iterates.}
\begin{abstract}
\noindent{} Let $\varphi$ be the Euler's function and fix an integer $k\ge 0$. We show that, for every initial value $x_1\ge 1$, the sequence of positive integers $(x_n)_{n\ge 1}$ defined by $x_{n+1}=\varphi(x_n)+k$ for all $n\ge 1$ is eventually periodic. Similarly, for every initial value $x_1,x_2\ge 1$, the sequence of positive integers $(x_n)_{n\ge 1}$ defined by $x_{n+2}=\varphi(x_{n+1})+\varphi(x_n)+k$ for all $n\ge 1$ is eventually periodic, provided that $k$ is even. 
\end{abstract}
\maketitle
\thispagestyle{empty}
%
\section{Introduction and Main Results}\label{sec:introduction}

Let $\mathbb{N}$ be the set of positive integers and fix an arithmetic function $f: \mathbb{N}^d\to \mathbb{N}$, for some $d \in \mathbb{N}$. Let $(x_n)_{n\ge 1}$ be a sequence of positive integers which satisfies the recurrence 
\begin{equation}\label{eq:defrecurrence}
x_{n+d}=f(x_n,\ldots,x_{n+d-2},x_{n+d-1}) 
\quad \text{ for all }n \in \mathbb{N},
\end{equation}
with starting values $x_1,\ldots,x_d\in \mathbb{N}$. In the case where $d=1$ the sequence $(x_n)_{n\ge 1}$ is simply the orbit of $x_1$ with respect to $f$. The aim of this note is to study whether certain recurrence sequences $(x_n)_{n\ge 1}$ of the type \eqref{eq:defrecurrence} are eventually periodic independently of their starting values, that is, for all $x_1,\ldots,x_d \in \mathbb{N}$ there exists $T \in \mathbb{N}$ such that $x_n=x_{n+T}$ for all sufficiently large $n$. 

Here and after, we will use implicitly the basic observation that a recurrence sequence $(x_n)_{n\ge 1}$ as in \eqref{eq:defrecurrence} is eventually periodic if and only if it is bounded, cf. \cite[p.~45]{GEverest}. 

We start with a simple result for functions $f$ which are not too large: 
\begin{prop}\label{prop:basic}
Let $f: \mathbb{N}^d \to \mathbb{N}$ be a 
arithmetical 
function, with $d \in \mathbb{N}$, and 
suppose 
that there exists $C \in \mathbb{N}$ such that 
\begin{equation}\label{eq:decreasing}
f(n_1,\ldots,n_d) < \max\{n_1,\ldots,n_d\}
\end{equation}
for all $n_1,\ldots,n_d \in \mathbb{N}$ with $n_i \ge C$ for some $i\in \{1,\ldots,d\}$. 
Let $(x_n)_{n\ge 1}$ be a recurrence sequence as in \eqref{eq:defrecurrence}, with starting values $x_1,\ldots,x_d \in \mathbb{N}$.
%
Then 
\begin{equation}\label{eq:constant1}
\limsup_{n\to \infty} x_n \le \max \{f(n_1,\ldots,n_d): n_1,\ldots,n_d\le C-1\}. 
\end{equation}
In particular, 
$(x_n)_{n\ge 1}$ 
is upper bounded; hence, eventually periodic. 
\end{prop}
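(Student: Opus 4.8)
The plan is to show directly that the sequence $(x_n)_{n\ge 1}$ is bounded above by the right-hand side of \eqref{eq:constant1}, and then invoke the stated observation (eventual periodicity is equivalent to boundedness for sequences of the form \eqref{eq:defrecurrence}). The key structural fact I would exploit is that once a window of $d$ consecutive terms all drop below the threshold $C$, the hypothesis \eqref{eq:decreasing} no longer forces a decrease, but the value produced is at most $M := \max\{f(n_1,\ldots,n_d): n_1,\ldots,n_d \le C-1\}$; and conversely, as long as some term in the current window is $\ge C$, the next term is strictly smaller than the maximum of that window. So the rough picture is: large values strictly decrease (in the max sense) until the whole window falls below $C$, after which everything stays bounded by $\max\{C-1, M\}$.

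First I would set $M$ to be the quantity on the right of \eqref{eq:constant1} and let $B := \max\{C-1, M\}$. I claim $x_n \le B$ for all sufficiently large $n$; in fact it suffices to show $\limsup_n x_n \le M$, but bounding by $B$ first is cleaner. Consider the quantity $m_n := \max\{x_n, x_{n+1}, \ldots, x_{n+d-1}\}$, the maximum over the $n$-th window of length $d$. I would argue that the sequence $(m_n)_{n\ge 1}$ is eventually non-increasing and in fact stabilizes. Indeed, for each $n$, the term $x_{n+d} = f(x_n,\ldots,x_{n+d-1})$ satisfies: if some $x_{n+i} \ge C$ (i.e. $m_n \ge C$), then $x_{n+d} < m_n$ by \eqref{eq:decreasing}; if all $x_{n+i} \le C-1$ (i.e. $m_n \le C-1$), then $x_{n+d} \le M \le B$. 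In either case $x_{n+d} \le \max\{m_n - 1, \, \min(m_n, B)\}$, which in particular gives $m_{n+1} = \max\{x_{n+1},\ldots,x_{n+d}\} \le \max\{m_n, x_{n+d}\} \le \max\{m_n, B\}$; more usefully, when $m_n \ge C$ one gets $x_{n+d} \le m_n - 1$, so after $d$ further steps every entry of the window has been ``refreshed'' to a value $< m_n$ unless the window already dropped below $C$.

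To make the stabilization rigorous I would proceed by strong induction on the value $m_1$. If $m_1 \le B$, then by the case analysis above every subsequent $x_{n+d}$ with $n \ge 1$ satisfies $x_{n+d} \le \max\{m_n - 1, B\}$, and an easy induction shows $m_n \le \max\{m_1, B\} = B$ for all $n$, so we are done with bound $B$; to upgrade to $M$, note that after the window has been below $C$ once, \eqref{eq:decreasing} never applies again only if values stay below $C$, but every term from that point is an $f$-value of arguments $\le C-1$, hence $\le M$. If instead $m_1 > B \ge C$, then \eqref{eq:decreasing} applies to the first window, giving $x_{d+1} < m_1$; iterating, within the next $d$ steps every window entry is replaced by something $< m_1$ (because each new term $x_{n+d}$ with $m_n \ge C$ is $< m_n \le m_1$, and terms that were already in the first window and $< m_1$ stay), so $m_{d+1} < m_1$, or else some earlier window already had max $\le C-1 \le B$, contradicting $m_j > B$ for all $j$ in that range. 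Thus $m$ strictly decreases along the subsequence $(m_{1+jd})_j$ until it reaches a value $\le B$, which must happen after finitely many steps since $m_n \in \mathbb{N}$. From that point the previous paragraph's argument applies and yields $x_n \le M$ eventually. Finally, since $(x_n)$ is bounded and satisfies \eqref{eq:defrecurrence}, it is eventually periodic by the observation recalled after \eqref{eq:defrecurrence}.

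The main obstacle, and the place requiring the most care, is the bookkeeping in the step ``within $d$ steps the whole window drops below $m_1$'': one must be careful that a term $x_{n+i}$ that equals $m_n$ is not re-copied into later windows and that the strict decrease $x_{n+d} < m_n$ genuinely propagates. This is handled by noting that the window $[n, n+d-1]$ slides forward one index at a time and every entry leaving the window is eventually replaced, and that $f$-values computed from a window whose max is $\ge C$ are strictly below that max. A clean way to package this is to observe that the ``level sets'' $\{n : x_n = v\}$ for $v > B$ can only feed strictly smaller values into the recurrence, so the largest value attained can only decrease over time until it falls to $\le B$; everything else is then routine. One should also double-check the degenerate possibilities, e.g. $C = 1$ (then \eqref{eq:decreasing} holds for all tuples and $m_n$ strictly decreases until forced to $1$, but $1 \le B$ trivially) and the case $d = 1$ (where the argument reduces to: the orbit strictly decreases until it enters $\{1, \ldots, C-1\}$, after which $x_{n+1} = f(x_n) \le M$).
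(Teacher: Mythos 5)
Your proposal is correct and follows essentially the same route as the paper: track the sliding-window maximum $m_n$, use \eqref{eq:decreasing} to show it drops by at least one every $d$ steps while some entry of the window is $\ge C$, and once a window lies entirely below $C$ bound everything by $M$. The only spot worth tightening is the ``upgrade to $M$'' step, where you should split on whether $M\ge C$ (then all eventual values are $\le B=M$ directly, since each new term is either an $f$-value of a window in $\{1,\dots,C-1\}^d$ or strictly below a window max $\le M$) or $M\le C-1$ (then all subsequent windows stay below $C$ and every new term is $\le M$); your $B=\max\{C-1,M\}$ already contains this case distinction implicitly.
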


Special instances of Proposition \ref{prop:basic} in the one-dimensional case $d=1$ have been previously obtained in the literature. For example, Porges \cite{MR12611} considered the case where $f(n)$ is the sum of squares of the digits of $n$, 
cf. also 
\cite{MR40560, MR4390667, MR122751}. 
Note that condition \eqref{eq:decreasing} holds if $f(n_1,\ldots,n_d)=o(N)$ as $N=\max\{n_1,\ldots,n_d\}\to \infty$. 

Of course, there exist other functions $f$ which do not satisfy condition \eqref{eq:decreasing} and such that every sequence $(x_n)_{n\ge 1}$ as in \eqref{eq:defrecurrence} is eventually periodic: as a trivial example, one can consider $f(n_1,\ldots,n_d):=\max\{n_1,\ldots,n_d\}$ for all $(n_1,\ldots,n_d) \in \mathbb{N}^d$. On the opposite extreme, if $f$ is slightly bigger (for instance, $f(n_1,\ldots,n_d):=\max\{n_1,\ldots,n_d\}+1$) then there are no eventually periodic sequences $(x_n)_{n\ge 1}$ as in \eqref{eq:defrecurrence}. This is the starting point for this work, which motivates the following heuristic: \emph{if a function $f$ satisfies \eqref{eq:decreasing} \textquotedblleft on average,\textquotedblright\ then every sequence $(x_{n})_{n\ge 1}$ as \eqref{eq:defrecurrence} should be eventually periodic, independently of its starting values.} 

We are going to confirm the above heuristic in two cases which involve (shifted iterates of) the Euler's function $\varphi$ (recall that $\varphi(n)$ is the number of integers in $\{1,\ldots,n\}$ which are coprime with $n$). Our first main result follows.
\begin{theorem}\label{thm:main1}
Fix an integer $k\ge 0$ and let $(x_n)_{n\ge 1}$ be the recurrence sequence defined by 
$$
x_{n+1}=\varphi(x_n)+k
\quad \text{ for all }n \in \mathbb{N},
$$
with starting value $x_1 \in \mathbb{N}$. Then 
\begin{equation}\label{eq:eq:claimthm1}
\sup_{n \in \mathbb{N}} 
x_n \le  \max\{x_1,k^4\}+(k+1)^2. 
\end{equation}
In particular, $(x_n)_{n\ge 1}$ is 
eventually periodic. 
\end{theorem}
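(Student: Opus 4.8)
The plan is to show that the sequence $(x_n)_{n\ge 1}$ is bounded by the quantity on the right-hand side of \eqref{eq:eq:claimthm1}; eventual periodicity then follows from the basic observation recalled in the introduction. The strategy is to prove that once $x_n$ is large enough relative to $k$, the next term $x_{n+1}$ is strictly smaller, so that the sequence cannot escape to infinity, and to control precisely how large a value it can take just after it was below the threshold.

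\medskip

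\noindent\textbf{Step 1: a decrease estimate for large arguments.} First I would establish that there is an explicit threshold $C=C(k)$ such that $\varphi(m)+k<m$ whenever $m\ge C$. The key input is the classical lower bound $\varphi(m)\le m-\sqrt{m}$ for composite $m$ (since the smallest prime factor $p\le\sqrt m$ divides $m-\varphi(m)$), together with the fact that for $m=p$ prime one has $\varphi(p)+k=p-1+k$, which is $\ge p$ exactly when $k\ge 1$. So primes are the obstruction: if $x_n=p$ is a large prime, then $x_{n+1}=p-1+k$. One iteration later, $p-1+k$ is even (for $k$ odd, $p-1$ is even; in general $p-1$ is even for $p>2$), hence composite once it exceeds $k$, and then the $\sqrt{m}$-savings kick in. Quantitatively, starting from a prime $p$ that is large compared to $k$, within a bounded number of steps (in fact after at most two steps) the sequence drops below its previous value, and more precisely I would show that from any $x_n\ge k^4$ the sequence strictly decreases within a controlled window, using $\sqrt{m}\ge k+1$ when $m\ge (k+1)^2$.

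\medskip

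\noindent\textbf{Step 2: bounding the overshoot.} The sequence can only increase by passing through a prime, and from a prime $p$ it jumps to $p-1+k<p+k$. So if $x_n<k^4+(k+1)^2$ but $x_{n+1}\ge x_n$, then $x_n$ must be prime and $x_{n+1}=x_n-1+k\le k^4+(k+1)^2-1+k$, which is still at most $\max\{x_1,k^4\}+(k+1)^2$ after absorbing lower-order terms; conversely the arithmetic needs checking that one never overshoots the claimed bound. More carefully: I would argue that the set $S=\{m:\varphi(m)+k\ge m\}$ consists only of $1$, of primes, and of a few tiny exceptions, all of which are $\le$ some small bound depending on $k$ of size $O(k)$ or so; and that for $m\in S$ one has $\varphi(m)+k\le m+k\le \max\{m,k^4\}+(k+1)^2$ trivially while for $m\notin S$ we have $\varphi(m)+k<m$. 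An easy induction on $n$ then shows $x_n\le\max\{x_1,k^4\}+(k+1)^2$ for all $n$: if the bound holds at stage $n$, either $x_n\notin S$ and $x_{n+1}<x_n$ keeps the bound, or $x_n\in S$, in which case either $x_n$ is small ($\le$ the $O(k)$ bound) so $x_{n+1}\le O(k)+k$ is well within range, or $x_n$ is a large prime $\le\max\{x_1,k^4\}+(k+1)^2$ and $x_{n+1}=x_n+k-1$, where one verifies that $x_n+k-1$ still does not exceed the stated bound because whenever $x_n$ is close to the top of the allowed range the value $x_n+k-1$ was already accounted for, or one uses that a prime $p$ with $p+k-1$ exceeding $\max\{x_1,k^4\}+(k+1)^2$ would force $p$ itself to be $>k^4$, and then the subsequent two steps bring it back down below $p$, so the supremum is attained at such a $p\le \max\{x_1,k^4\}+(k+1)^2$.

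\medskip

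\noindent\textbf{Main obstacle.} The substantive difficulty is handling the prime case cleanly: primes are precisely where \eqref{eq:decreasing} fails (this is the ``on average'' phenomenon alluded to in the introduction), so the decrease is not monotone and one must track the sequence over a window of two or three consecutive steps rather than one. The delicate bookkeeping is to make the constants match exactly: showing that after hitting a large prime $p$ the value $p+k-1$ (or $p+k-1$ then $\varphi(p+k-1)+k$, etc.) is forced back strictly below $p$ within the window, which requires $\sqrt{p+k-1}>k$, i.e. $p>k^2-k+1$, comfortably implied by $p>k^4$; and then threading this through the induction so that the final bound is literally $\max\{x_1,k^4\}+(k+1)^2$ rather than something slightly larger. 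Everything else — the $\varphi(m)\le m-\sqrt m$ inequality, the list of tiny exceptions, the reduction to boundedness — is routine.
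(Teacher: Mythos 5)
There is a genuine gap, and it sits exactly at the point you flag as the ``main obstacle.'' Your plan rests on the claim that after hitting a large prime $p$ the sequence drops back below $p$ ``after at most two steps,'' because $p-1+k$ is even and hence composite. That is only true when $k$ is even: for $p>2$ and $k$ odd, $p-1+k$ is \emph{odd} and may itself be prime, in which case the sequence jumps again, to $p+2(k-1)$, and this can repeat. For example with $k=7$ one can have $x_n=11,\,17,\,23,\,29$ all prime before reaching the composite $35$ --- a chain of length $4$, not $2$. So the window over which you must track the sequence is not ``two or three steps'' but can be as long as $k$ steps, and nothing in your proposal bounds its length. The paper's key idea, which is absent from your sketch, is a divisibility argument: if $x_r,x_{r+1},\dots,x_{r+k-1}$ were all prime (and $>k$), they would satisfy $x_{r+i}=x_r+i(k-1)\equiv x_r-i\pmod{k}$, so among these $k$ terms one would be divisible by $k$, a contradiction. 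This pigeonhole step is what guarantees a composite term, and hence a $\sqrt{m}$-saving, within $k$ steps.

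This missing idea also breaks the bookkeeping in your Step 2. Over a window of up to $k$ steps the sequence can climb by as much as $(k-1)^2$, not merely by $k-1$ as your single-step overshoot analysis assumes; the theorem's threshold $k^4$ is chosen precisely so that the saving $\sqrt{x}\ge k^2$ at the first composite term in the window beats the accumulated gain $(k-1)^2+k$. Your induction ``either $x_n\notin S$ and $x_{n+1}<x_n$, or $x_n$ is a large prime and $x_{n+1}=x_n+k-1$'' therefore cannot close as stated: after one step up you may land on another prime still in $S$, and without the mod-$k$ argument you have no control on how far above the starting prime the sequence travels before it turns around. (A smaller quibble: composite exceptions to $\varphi(m)+k<m$ satisfy $\sqrt{m}\le k$, i.e.\ $m\le k^2$, which is $O(k^2)$ rather than $O(k)$, but that does not affect the structure of the argument.) The rest of your outline --- the bound $\varphi(m)\le m-\sqrt{m}$ for composite $m$, the reduction of periodicity to boundedness, and the general shape ``track maxima over windows'' --- matches the paper; it is the length-$k$ window together with the residue argument that you would need to supply.
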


Note that the dependence of the upper bound \eqref{eq:eq:claimthm1} on $x_1$ cannot be removed: indeed, if $k:=x_1-\varphi(x_1)$ for some $x_1\in \mathbb{N}$, then the sequence $(x_{n})_{n\ge 1}$ is constantly equal to $x_1$. 

In addition, the trivial case $k=0$ in Theorem \ref{thm:main1} has been already considered in the literature from different viewpoints (and, of course, it follows by Proposition \ref{prop:basic} since $\varphi(n)\le n-1$ for all $n\ge 2$). Indeed, given a starting value $x_1 \in \mathbb{N}$, then $x_{n+1}=\varphi^{(n)}(x_1)$ for all $n \in \mathbb{N}$, where $\varphi^{(m)}$ is the $m$-th fold iteration of $\varphi$. For instance, Pillai \cite{MR1561820} showed that
$$
\left\lfloor \frac{\log x_1-\log 2}{\log 3}\right\rfloor+1 \le N(x_1) \le 
\left\lfloor \frac{\log x_1}{\log 2}\right\rfloor+1
\quad 
\text{ for all }x_1 \in \mathbb{N},
$$
where $N(x_1)$ is the minimal integer $n$ for which $x_n=1$ (see also \cite{MR7755}) and it has been conjectured by Erd{\H o}s et al. \cite{MR1084181} that $N(x_1) \sim \alpha \log x_1$ as $x_1\to \infty$, for some $\alpha \in \mathbb{R}$.  
It is known that the understanding of the multiplicative structure of $\varphi$ and its iterates is in some sense equivalent to the study of the behavior of the integers of the form $p-1$, where $p$ is prime. See also \cite{MR2289421, MR2393364, MR2860571, MR2193156} for related works.

On the other hand, if $k\ge 1$ then the function $f(n):=\varphi(n)+k$ does not satisfy condition \eqref{eq:decreasing}: indeed, $\varphi(p)=p-1$ for all primes $p$, hence $f(p)\ge p$. However, it is well known that 
\begin{equation}\label{eq:approximation}
\frac{1}{n}\sum_{i=1}^n \varphi(i)=\frac{3}{\pi^2}n+O(\log n) 
\quad \text{ as }n\to \infty
\end{equation}
(see e.g. 
\cite[Theorem 8.6]{MR2919246}). Hence, approximating roughly $f(n)$ with $cn+k$, where $c=3/\pi^2 \in (0,1)$, we expect that \eqref{eq:decreasing} holds \textquotedblleft on average,\textquotedblright\ which is the heuristic behind Theorem \ref{thm:main1}. 

Our second main result is as follows. 
\begin{theorem}\label{thm:main2}
Fix an even integer $k\ge 0$ and let $(x_n)_{n\ge 1}$ be the recurrence sequence defined by 
$$
x_{n+2}=\varphi(x_{n+1})+\varphi(x_n)+k
\quad \text{ for all }n \in \mathbb{N},
$$
with starting values $x_1,x_2 \in \mathbb{N}$. Then 
\begin{equation}\label{eq:eq:claimthm2}
\sup_{n\in \mathbb{N}} x_n \le 
4^{X^{3^{k+1}}}, 
\quad \text{ where }\,\,
X:=\frac{3x_1+5x_2+7k}{2}.
\end{equation}
In particular, $(x_n)_{n\ge 1}$ is 
eventually periodic. 
\end{theorem}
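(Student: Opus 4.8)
The strategy mirrors the one-dimensional case, but now the obstruction is that for a pair of primes $(p,q)$ one has $\varphi(p)+\varphi(q)+k = p+q+k-2$, which exceeds $\max\{p,q\}$ whenever the smaller of the two is at least $3-k$; so a single step need not decrease the running maximum. The plan is to control instead a suitable \emph{linear combination} of two consecutive terms, say a quantity of the form $y_n := a x_n + b x_{n+1} + c$ for well-chosen constants $a,b,c$ depending on $k$, and to show that $y_{n}$ is eventually non-increasing, or more precisely that once both $x_n$ and $x_{n+1}$ are large the pair $(x_{n+1},x_{n+2})$ lies in a region where $y_{n+1}<y_n$. The even-ness of $k$ is used to guarantee parity compatibility: since $\varphi(m)$ is even for every $m\ge 3$, the recurrence $x_{n+2}=\varphi(x_{n+1})+\varphi(x_n)+k$ forces $x_n$ to be even for all large $n$ (once we pass the small-value regime), which means we may apply the sharper bound $\varphi(m)\le m/2$ valid for even $m$. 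This is the key quantitative input that is \emph{false} for odd arguments and explains the hypothesis on $k$.

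Concretely, I would first dispose of the bounded-small-values case: if at some stage both $x_n,x_{n+1}$ are below an explicit threshold $T=T(k)$, then from that point on the sequence evolves inside the finite set $\{1,\dots,T'\}^2$ for a slightly larger $T'=T'(k)$, hence is eventually periodic, and the trivial bound in \eqref{eq:eq:claimthm2} holds with room to spare. Otherwise, I would show: once $\max\{x_n,x_{n+1}\}$ is large, say larger than some $C(k)$, both entries become even and stay even forever (tracking parity through the recurrence, using that $k$ is even and $\varphi$ of a large number is even). In the even regime, $x_{n+2}=\varphi(x_{n+1})+\varphi(x_n)+k \le x_{n+1}/2 + x_n/2 + k$. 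This is the contraction: setting $s_n:=x_n+x_{n+1}$ gives $s_{n+1}=x_{n+1}+x_{n+2}\le x_{n+1}+x_n/2+x_{n+1}/2+k = \tfrac{3}{2}x_{n+1}+\tfrac12 x_n + k$, which is not obviously a contraction of $s_n$; so one needs a weighted sum. Trying $y_n := x_n + 2x_{n+1}$ one gets $y_{n+1}=x_{n+1}+2x_{n+2}\le x_{n+1}+x_n+x_{n+1}+2k = 2x_{n+1}+x_n+2k$, and since $y_n = x_n+2x_{n+1}$ we have $y_{n+1}-y_n \le -? $; this still is not strictly decreasing, so the right linear form must be calibrated against the dominant eigenvalue of the matrix $\left(\begin{smallmatrix}0&1\\ 1/2 & 1/2\end{smallmatrix}\right)$, which is $1$, the borderline case. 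Because the spectral radius is exactly $1$, no linear potential is \emph{strictly} decreasing; the correct move is therefore to use the \emph{strict} inequality $\varphi(m)<m/2$ available unless $m$ is a power of $2$, or to iterate two steps and exploit that $\varphi(m)\le m/2$ with equality only for $m=2^j$, so that in two steps one gains a genuine multiplicative factor bounded away from $1$ unless the sequence is trapped among powers of $2$ — a case one treats by hand.

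I expect the main obstacle to be precisely this borderline spectral-radius phenomenon: the naive contraction argument stalls at ratio $1$, and one must extract the extra saving either from the gap between $\varphi(m)$ and $m/2$ for non-powers-of-$2$, or by a more delicate two-step analysis. The tower-exponential shape of the bound $4^{X^{3^{k+1}}}$ in \eqref{eq:eq:claimthm2} strongly suggests that the actual argument is not a clean one-step contraction but rather an iterated case analysis where each of roughly $k$ \textquotedblleft levels\textquotedblright\ contributes another exponentiation — likely one repeatedly bounds how long the sequence can grow before parity and the $\varphi(m)\le m/2$ bound force a decrease, with the worst case being a cascade through $O(k)$ regimes. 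So after setting up the even-regime contraction, the technical heart will be a careful induction on the number of steps, tracking an explicit (and unavoidably enormous) bound on $x_n$ in terms of $X$ and $k$, and verifying that it propagates through the recurrence; the final constant is then read off from the initial segment via the definition of $X$ as the weighted combination $\tfrac{3x_1+5x_2+7k}{2}$ of the starting data. Once $(x_n)_{n\ge 1}$ is shown to be bounded, eventual periodicity is immediate from the observation recalled after \eqref{eq:defrecurrence}.
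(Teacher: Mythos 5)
You correctly identify the central difficulty: with $x_n$ eventually even and $\varphi(m)\le m/2$, the linearized map $(a,b)\mapsto(b,\tfrac{a}{2}+\tfrac{b}{2}+k)$ has spectral radius exactly $1$ (eigenvector $(1,1)$), so no linear potential strictly decreases, and the sequence can genuinely drift upward by $k$ per step when both entries realize $\varphi(m)=m/2$. But your proposed repairs do not close this gap. The saving from \textquotedblleft $\varphi(m)<m/2$ unless $m$ is a power of $2$\textquotedblright\ is only \emph{additive}, not multiplicative: for $m=2p$ with $p$ a large prime, $\varphi(m)=p-1=\tfrac{m}{2}-1$, so a two-step analysis gains at most $O(1)$ while the recurrence adds $k$; no factor bounded away from $1$ is available, and the argument stalls for every $k\ge 2$. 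Likewise, the claim that once both entries drop below a threshold $T(k)$ the orbit stays in a finite box is unjustified --- preventing escape from the box is exactly the statement to be proved.

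The paper's actual mechanism is different and is the idea your plan is missing. One constructs, via Mertens' theorem, $k+1$ disjoint blocks of primes $p_{r_j},\dots,p_{r_{j+1}-1}$ (all exceeding $X$) with $\prod_i(1-1/p_i)<\tfrac12$ on each block, and then uses the Chinese remainder theorem to produce $y$ with $y\equiv j\pmod{q_j}$, where $q_j$ is the product of the $j$-th block. If $v$ is minimal with $x_v\ge 2y$, the only dangerous predecessors are the even numbers $x_{v-1}=2(y-j)$ with $1\le j\le k$ (for smaller even $x_{v-1}$ the bound $\varphi(x_{v-1})\le x_{v-1}/2<y-k$ already suffices); but by construction $q_j$ divides $y-j$, so $x_{v-1}$ is divisible by an entire block of primes and hence $\varphi(x_{v-1})<(y-j)/2<y-k$. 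Summing gives $x_v<2(y-k)+k\le 2y$, a contradiction, so $x_n<2y$ for all $n$. The tower $4^{X^{3^{k+1}}}$ then comes from bounding the largest prime needed (each block fits in an interval $(X_n,X_n^3]$, iterated $k+1$ times) together with $\prod_{p\le x}p<4^x$ --- not from an induction over $O(k)$ growth regimes as you conjecture. Without this arithmetic input (or something equivalent), the purely analytic contraction argument cannot be completed.
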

The heuristic supporting Theorem \ref{thm:main2} is similar: thanks to \eqref{eq:approximation}, the value $f(n,m):=\varphi(n)+\varphi(m)+k$ can be roughly upper bounded by $2c\max\{n,m\}+k$, which is definitively smaller than $\max\{n,m\}$ since $2c=6/\pi^2 <1$. 

We end with an open question to check whether, if $d$ is sufficiently large, then there exist starting values $x_1,\ldots,x_d \in \mathbb{N}$ such that the sequence $(x_n)_{n\ge 1}$ defined as in \eqref{eq:defrecurrence} with 
$$
f(n_1,\ldots,n_d)=\varphi(n_1)+\cdots+\varphi(n_d)
$$
is \emph{not} eventually periodic. In a sense, this is somehow related to the open question known as Lehmer's totient problem \cite{Lehmer}, which asks about the existence of a composite $q\ge 2$ such that $\varphi(q)$ divides $q-1$: indeed, if $r:=(q-1)/\varphi(q)$, $d=r$, and $x_1=\cdots=x_d=q$, then the sequence $(x_n)_{n\ge 1}$ would be constant.

\section{Proofs}

We start with the proof of Proposition \ref{prop:basic}.

\begin{proof}
[Proof of Proposition \ref{prop:basic}]
Let $\mathcal{Q}$ be the set of $d$-uples $(n_1,\ldots,n_d) \in \mathbb{N}^d$ with $n_i\le C-1$ for all $i \in\{1,\ldots,d\}$. 
Suppose that $(n_1,\ldots,n_d) \in  \mathbb{N}\setminus \mathcal{Q}$. 
Hence, we can pick the largest index $i \in \{1,\ldots,d\}$ such that $n_i=\max\{n_1,\ldots,n_d\}$. In particular, $n_i\ge C$. 
We claim that there exists $m \in \mathbb{N}$ such that $(n_{m+1}, \ldots, n_{m+d}) \in \mathcal{Q}$. In addition, if $m$ is the least such integer then $\max\{n_j,\ldots,n_{j+d-1}\}$ is decreasing for $j \in \{1,\ldots,m\}$. 
To this aim, suppose for the sake of contradiction that the claim does not hold. 
By the standing hypothesis  \eqref{eq:decreasing}, we get $n_{d+1}=f(n_1,\ldots,n_d)<  n_{i}$. Repeating this reasoning, we obtain that $n_{d+j}< \max\{n_j,\ldots,n_{d+j-1}\}= n_{i}$ for all $j\in \{1,\ldots,i\}$. Hence, $\max\{n_{i+1},\ldots,n_{i+d}\}\le n_{i}-1$. 
Proceeding similarly, it follows that 
$$
\max\{n_{i+(k-1)d+1},\ldots,n_{i+kd}\}\le n_{i}-k 
\quad \text{ for all }k\in \mathbb{N}. 
$$
However, if $k=n_{i}+1-C$ then $(n_{i+(k-1)d+1},\ldots,n_{i+kd}) \in \mathcal{Q}$, which proves the claim. 

To complete the proof, fix starting values $x_1,\ldots,x_d \in \mathbb{N}$. 
By the above claim and the finiteness of $\mathcal{Q}$, 
it follows that the sequence $(x_n)_{n\ge 1}$ is upper bounded by the constant 
$$
\max\left(\{x_1,\ldots,x_d\}\cup \{f(n_1,\ldots,n_d): (n_1,\ldots,n_d) \in \mathcal{Q}\}\right)
$$
and that the upper limit in \eqref{eq:constant1} holds.
\end{proof}

\begin{proof}
[Proof of Theorem \ref{thm:main1}]
First, let us suppose $k \le 1$ and fix a starting value $x_1 \in \mathbb{N}$. Then 
$$
x_{n+1}=\varphi(x_n)+k \le \max\{1,x_n-1\}+1=\max\{2,x_n\} 
$$
for all $n \in \mathbb{N}$, 
with the consequence that $x_n \le \max\{x_1,2\}$  for all $n \in \mathbb{N}$. 

Suppose hereafter that $k \ge 2$. Note that, for all $n,m \in \mathbb{N}$, we have
\begin{equation}\label{eq:trivialbound0}
\begin{split}
x_{n+m}&\le \max\{x_{n+m-1}-1,1\}+k \\
&= \max\{x_{n+m-1}+k-1,k+1\} \le \max\{x_n+m(k-1),k+1\}.
\end{split}
\end{equation}

Let us suppose for the sake of contradiction that $(x_n)_{n\ge 1}$ is not upper bounded. Hence, there exists a minimal $r_1 \in \mathbb{N}$ such that $x_{r_1} \ge k^4$ (in particular, $x_{r_1}>4$).

\begin{claim}\label{claim:exitencei}
There exists $i \in \{1,\ldots,k\}$ such that $x_{r_1+i}<x_{r_1}$.
\end{claim}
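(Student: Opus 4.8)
The plan is to exploit the fact that $\varphi(m)\le m/2$ whenever $m$ is even, and that $\varphi(m)$ is even whenever $m\ge 3$; combined with the shift by the even-or-not quantity $k$, iterating $\varphi(\cdot)+k$ forces the terms to become even and then to contract by roughly a factor $1/2$ at each subsequent step, which will be more than enough to drop below $x_{r_1}$ within $k$ steps once $x_{r_1}$ is as large as $k^4$.

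First I would record the elementary facts: (i) $\varphi(m)\le m-1$ for $m\ge 2$, with $\varphi(1)=1$; (ii) if $m$ is even then $\varphi(m)\le m/2$; (iii) if $m\ge 3$ then $\varphi(m)$ is even. From (iii), $x_{r_1}\ge k^4>4$ gives that $\varphi(x_{r_1})$ is even, hence $x_{r_1+1}=\varphi(x_{r_1})+k$ has the same parity as $k$. I would split on the parity of $k$. If $k$ is even, then $x_{r_1+1}$ is even, and as long as the running term stays $\ge 3$ it remains even at every subsequent step (even $+$ even), so (ii) applies repeatedly: $x_{r_1+j+1}=\varphi(x_{r_1+j})+k\le x_{r_1+j}/2+k$ for each such $j$. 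Unwinding, $x_{r_1+i}\le 2^{-i}x_{r_1+1}+2k\le 2^{-i}(x_{r_1}-1+k)+2k$; choosing $i$ on the order of $\log_2 k$ (certainly $i\le k$ once $k\ge 2$) makes the first term at most, say, $x_{r_1}/k$ or so, and since $x_{r_1}\ge k^4$ the total is well below $x_{r_1}$. If instead $k$ is odd, then $x_{r_1+1}$ is odd; but then $\varphi(x_{r_1+1})$ is even (using $x_{r_1+1}\ge 3$, which follows from $x_{r_1}$ being large and \eqref{eq:trivialbound0} controlling how fast it can have grown — or more simply from $x_{r_1+1}=\varphi(x_{r_1})+k\ge k\ge 3$ when $k\ge 3$, handling $k=2$ separately as it is even), so $x_{r_1+2}=\varphi(x_{r_1+1})+k$ is odd again; the point is that $\varphi$ applied to the odd term still roughly halves it: $\varphi(m)\le m/2$ fails for odd $m$ only mildly, but actually one still has $\varphi(m)<m$ and, crucially, $\varphi(m)\le m-\sqrt{m}$-type savings are not needed — instead I would observe that after one $\varphi$ the term becomes even-sized because $\varphi$ of an odd number $>1$ is even and $\le$ (that number)$\,\cdot\,(1-1/p)$ for its least prime factor $p\ge 3$, i.e. $\le 2m/3$, and then the shift $k$ is what we must beat. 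Carrying this out, $x_{r_1+i}\le (2/3)^{\lfloor i/1\rfloor}x_{r_1}+O(k)$, again dropping below $x_{r_1}$ for some $i\le k$ once $x_{r_1}\ge k^4$.

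The cleanest uniform argument, which I would actually write, avoids the parity case split: since $x_{r_1}>4$, $\varphi(x_{r_1})\le x_{r_1}-2$ is even, so $x_{r_1+1}\le x_{r_1}-2+k$; more importantly I claim $\varphi(m)\le (m+k)/2$ is the wrong target and instead use that for \emph{any} $m\ge 2$, $\varphi(m)\le m/2$ unless $m$ is twice an odd number's worst case — rather, the honest bound I will use is: for $m\ge 7$, $\varphi(m)\le \tfrac12 m$ if $m$ is even and $\varphi(m)\le \tfrac23 m$ if $m$ is odd, hence in all cases $\varphi(m)\le \tfrac23 m$ for $m\ge 2$ with the single exception handled directly. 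Then $x_{r_1+i}\le (2/3)^i x_{r_1}+3k$ for all $i$ with the intermediate terms $\ge 2$, and taking $i=k$ (so that $(2/3)^k x_{r_1}\le (2/3)^k \cdot(\text{something})$ — we need $(2/3)^k x_{r_1}+3k<x_{r_1}$, i.e. $3k<(1-(2/3)^k)x_{r_1}$, which holds since $x_{r_1}\ge k^4\ge 3k/(1-2/3)=9k$ for $k\ge 2$) we are done; one also checks the intermediate terms stay $\ge 2$, which is immediate since they are positive integers, and stay $<x_{r_1}$ so the recursion estimate is valid, which follows by induction on $i$.

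The main obstacle is purely bookkeeping: making sure the bound $\varphi(m)\le \tfrac23 m$ (valid for all $m\ge 2$: even $m$ gives $\le m/2$, odd $m\ge 3$ has a prime factor $p\ge 3$ so $\varphi(m)\le m(1-1/p)\le 2m/3$, and $m=1,2$ are trivial) is applied with all intermediate terms genuinely in the valid range, and that the arithmetic "$3k<(1-(2/3)^k)x_{r_1}$" is verified for every $k\ge 2$ from the hypothesis $x_{r_1}\ge k^4$ — both are routine, the only care needed being the smallest cases $k=2,3$. I expect no conceptual difficulty, only the need to state the elementary $\varphi$-bounds cleanly and to confirm $i\le k$ suffices.
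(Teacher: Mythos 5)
Your proposal has a fatal error: the bound $\varphi(m)\le \tfrac23 m$ for all $m\ge 2$, on which your ``clean uniform argument'' (and also your treatment of odd $k$) rests, is false. For an odd $m$ with least prime factor $p$ you have $\varphi(m)\le m(1-1/p)$, but $p\ge 3$ gives $1-1/p\ge 2/3$, not $\le 2/3$; the inequality only goes your way when $p\le 3$. Any prime $m\ge 5$ is a counterexample: $\varphi(m)=m-1>\tfrac23 m$. This is not a repairable bookkeeping slip, because the case your bound fails on is precisely the whole difficulty of the claim: if $x_n$ is prime then $x_{n+1}=\varphi(x_n)+k=x_n+(k-1)>x_n$ for $k\ge 2$, so a run of prime values makes the sequence \emph{increase} step after step, and no pointwise contraction estimate can rule that out. (Your even-$k$ parity observation is sound --- when $k$ is even, $x_{r_1+1}$ is even and two steps suffice --- but for odd $k$ your argument collapses exactly where the problem is hard.)

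The paper's proof confronts the prime-run case head-on. It first notes that if some $x_{r_1+i-1}$ with $i\in\{1,\dots,k\}$ is \emph{composite}, then $\varphi(x_{r_1+i-1})\le x_{r_1+i-1}-\sqrt{x_{r_1+i-1}}$, and since the terms can have drifted up by at most $(k-1)^2$ from $x_{r_1}\ge k^4$, the saving $\sqrt{x_{r_1+i-1}}\ge k^2$ beats the drift plus the shift $k$, giving $x_{r_1+i}<x_{r_1}$. It then shows some such term must be composite: if $x_{r_1},\dots,x_{r_1+k-1}$ were all prime (necessarily $>k$), each step would add exactly $k-1$, so these terms would be congruent to $x_{r_1},x_{r_1}-1,\dots,x_{r_1}-(k-1)$ modulo $k$, hence one of them would be divisible by $k$ --- impossible for a prime exceeding $k$. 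Some argument of this kind, handling consecutive prime values, is indispensable; your proposal is missing it.
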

\begin{proof}
Since $\varphi(n) \le n-\sqrt{n}$ whenever $n$ is composite (by the fact that there exists a divisor of $n$ which is at most $\sqrt{n}$), it follows that, if $x_{{r_1}+i-1}$ is composite for some $i \in \{1,\ldots,k\}$, then
\begin{displaymath}
x_{r_1+i}=\varphi(x_{r_1+i-1})+k \le x_{r_1+i-1}-\sqrt{x_{r_1+i-1}}+k.
\end{displaymath}
Considering that the map $x\mapsto x-\sqrt{x}$ is increasing on $(4,\infty)$ and using \eqref{eq:trivialbound0}, we obtain
\begin{displaymath}
\begin{split}
x_{r_1+i} &\le x_{r_1}+(i-1)(k-1)-\sqrt{x_{r_1}+(i-1)(k-1)}+k\\
&\le x_{r_1}+(k-1)^2-\sqrt{k^4}+k < x_{r_1}.
\end{split}
\end{displaymath}

To conclude, we show that there exists some $i \in \{1,\ldots,k\}$ for which $x_{{r_1}+i-1}$ is composite. Indeed, in the opposite, these $x_{{r_1}+i-1}$s are all primes (and greater than $k$), hence
$$
x_{r_1+i-1}=x_{r_1}+(i-1)(k-1) \equiv x_{r_1}-(i-1) \bmod{k}
$$
for all $i \in \{1,\ldots,k\}$. 
This is impossible, because there would exist $i \in \{1,\ldots,k\}$ such that $k$ divides $x_{{r_1}+i-1}$. 
\end{proof}

At this point, inequality \eqref{eq:trivialbound0} and Claim \ref{claim:exitencei} imply that there exists a minimal $i_1 \in \{1,\ldots,k\}$ such that $x_{r_1+i_1}< x_{r_1}$; hence
\begin{equation}\label{eq1}
\max\{x_1,\ldots,x_{r_1+i_1-1}\} \le x_{r_1}+(i_1-1)(k-1) < x_{r_1}+k^2.
\end{equation}

With the same reasoning, we can construct recursively sequences of positive integers $(r_n)$ and $(i_n)$ such that for all $n \in \NNb$: 
\begin{enumerate}[label={\rm (\roman{*})}]
\item $r_{n+1}$ is the minimal integer such that $r_{n+1}\ge r_n+i_n$ and $x_{r_{n+1}}\ge k^4$;
\item $i_{n+1}$ is the minimal integer in $\{1,\ldots,k\}$ such that $x_{r_{n+1}+i_{n+1}}<x_{r_n}$; hence, 
\begin{equation}\label{eq2}
\max\{x_{r_n+i_n},\ldots,x_{r_{n+1}+i_{n+1}-1}\} \le x_{r_{n+1}}+(i_{n+1}-1)(k-1) < x_{r_{n+1}}+k^2.
\end{equation}
\end{enumerate}

Lastly, note that 
\begin{equation}\label{eq3}
x_{r_{n}}=\varphi(x_{r_n-1})+k\le x_{r_n-1}+k \le k^4+k\le x_{r_1}+k
\end{equation}
for all $n \in \mathbb{N}$. 
Using inequalities \eqref{eq1}, \eqref{eq2}, and \eqref{eq3}, we conclude that 
$$
x_n < x_{r_1}+k^2+k\le \max\{x_1,k^4+k\}+k^2+k<\max\{x_1,k^4\}+(k+1)^2
$$
for all $n \in \mathbb{N}$. 
This proves \eqref{eq:eq:claimthm1}, concluding the proof. 
\end{proof}

\begin{remark}
A sketch of a shorter proof that the sequence $(x_n)_{n\ge 1}$ in Theorem \ref{thm:main1} is eventually periodic goes as follows: Set $y_n:=\varphi(x_n)$ for all $n \in \mathbb{N}$ and note that $y_n\le C n$ with $C:=\max\{x_1,k\}$. Hence $\{y_1,\ldots,y_n\}$ is contained in $V_n:=\varphi(\mathbb{N})\cap [1,Cn]$. By a classical result of Pillai \cite{MR1561819}, cf. also \cite{MR1642874} and references therein, we have $|V_n|=o(n)$ as $n\to \infty$, hence there exist distinct $i,j \in \mathbb{N}$ with $y_i=y_j$. This implies that $x_{i+1}=x_{j+1}$, therefore  $(x_n)_{n\ge 1}$ is eventually periodic. However, this does not lead to an effective upper bound as in \eqref{eq:eq:claimthm1}. 
\end{remark}

In the proof of Theorem \ref{thm:main2}, we will need also the effective version of the third Mertens' theorem given by Rosser and Schoenfeld \cite{MR137689} in 1962 (see also \cite{MR3813836, MR3434886}). As usual, hereafter we reserve the letter $p$ for primes. 
\begin{prop}\label{prop:thirdmertens}
Let $\gamma:=\lim_n\left(\sum_{i\le n}1/i-\log n\right)=0.57721...$ be the Euler--Mascheroni's constant. Then the following inequality holds for all $x\ge 2$\textup{:}
$$
\frac{e^{-\gamma}}{\log x} \left(1- \frac{1}{\log^2 x} \right) < 
\prod_{p\le x}
\left( 1 - \frac{1}{p} \right) < \dfrac{e^{-\gamma}}{\log x} \left(1+ \frac{1}{2\log^2 x} \right).
$$ 
\end{prop}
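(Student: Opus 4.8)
The plan is to treat Proposition~\ref{prop:thirdmertens} as the standard explicit form of Mertens' third theorem and to reduce it to an explicit form of Mertens' \emph{second} theorem. Taking logarithms,
\[
\log \prod_{p\le x}\left(1-\frac1p\right)
=\sum_{p\le x}\left(\log\left(1-\frac1p\right)+\frac1p\right)-\sum_{p\le x}\frac1p .
\]
The first sum on the right converges as $x\to\infty$, and the classical identity relating the Euler--Mascheroni constant to the Mertens constant $M=0.26149\ldots$ gives $\sum_{p}\left(\log(1-1/p)+1/p\right)=M-\gamma$; since each summand equals $-\sum_{k\ge 2}1/(kp^k)$, the tail $T(x):=\sum_{p>x}\left(\log(1-1/p)+1/p\right)$ is negative and of size $O(1/x)$. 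Feeding in Mertens' second theorem in the quantitative shape $\sum_{p\le x}1/p=\log\log x+M+E(x)$, I get
\[
\prod_{p\le x}\left(1-\frac1p\right)=\frac{e^{-\gamma}}{\log x}\exp(-E(x)-T(x)).
\]

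Since $|T(x)|\ll 1/x$ is negligible against $1/\log^2 x$, everything now hinges on controlling $E(x)$ explicitly and with the correct sign: once one has bounds of the form $-c_1/\log^2 x< E(x)+T(x)< c_2/\log^2 x$ for suitable numerical constants $c_1,c_2$ and all $x\ge 2$, exponentiating and using $1+t\le e^{t}\le 1+t+t^2$ in the (small) relevant range yields precisely the two-sided estimate claimed, the asymmetry between the lower factor $1-1/\log^2 x$ and the upper factor $1+1/(2\log^2 x)$ reflecting the asymmetry of the available explicit bounds on $E(x)$. Equivalently, by Abel summation one may pass directly from explicit estimates for Chebyshev's function $\theta(x)=\sum_{p\le x}\log p$ to $\sum_{p\le x}\log(1-1/p)$; this is the route actually followed in \cite{MR137689}.

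The genuinely hard part is the explicit, numerically sharp control of $E(x)$ --- equivalently of $\theta(x)-x$ or of $\psi(x)-x$ --- uniformly for all $x\ge 2$. That is the substantial analytic content of the Rosser--Schoenfeld paper: it relies on explicit zero-free regions and density bounds for the Riemann zeta function, supplemented by a direct numerical check on a bounded initial range. I would not reprove this; rather I would invoke \cite{MR137689} directly, verifying the stated inequality by hand on the finite interval $2\le x\le x_0$ on which the asymptotic estimates are not yet decisive, and deducing it for $x>x_0$ from their bounds on $\theta$ via the partial summation indicated above.
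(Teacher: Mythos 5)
Your proposal is correct and ultimately rests on the same foundation as the paper, whose entire proof is the citation to \cite[Theorem 7 and its Corollary]{MR137689}; your added sketch of the reduction to Mertens' second theorem accurately describes how that reference obtains the result, but the substantive explicit bounds are delegated to Rosser--Schoenfeld in both cases.
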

\begin{proof}
See \cite[Theorem 7 and its Corollary]{MR137689}. 
\end{proof}

\begin{corollary}\label{cor:boundprimes}
If $x\ge 6$ then 
$$
\prod_
{x<p\le x^3}
\left( 1 - \frac{1}{p} \right)<\frac{1}{2}.
$$
\end{corollary}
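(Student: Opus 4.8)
The plan is to express the product over primes in the range $(x, x^3]$ as a quotient of two partial products of $\prod_p(1-1/p)$, and then estimate that quotient using Proposition~\ref{prop:thirdmertens}. Concretely, every prime $p \le x^3$ is either $\le x$ or lies in $(x, x^3]$, so
$$
\prod_{x < p \le x^3}\left(1 - \frac{1}{p}\right) = \frac{\prod_{p \le x^3}\left(1 - 1/p\right)}{\prod_{p \le x}\left(1 - 1/p\right)}.
$$
For $x \ge 6$ both $x$ and $x^3$ exceed $2$, so Proposition~\ref{prop:thirdmertens} applies to numerator and denominator. I would bound the numerator from above by the upper estimate at $x^3$ (using $\log x^3 = 3\log x$, hence $\log^2 x^3 = 9\log^2 x$) and the denominator from below by the lower estimate at $x$.

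After this substitution the factor $e^{-\gamma}$ cancels and the leading ratio $\frac{\log x}{\log x^3}$ equals $\frac13$, leaving
$$
\prod_{x < p \le x^3}\left(1 - \frac{1}{p}\right) < \frac{1}{3}\cdot\frac{1 + \frac{1}{18\log^2 x}}{1 - \frac{1}{\log^2 x}}.
$$
It then suffices to check that the second factor is strictly less than $\frac32$; clearing denominators, this is equivalent to $2\left(1 + \frac{1}{18\log^2 x}\right) < 3\left(1 - \frac{1}{\log^2 x}\right)$, i.e. to $\frac{28}{9\log^2 x} < 1$, i.e. to $\log^2 x > \frac{28}{9}$.

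Finally, I would verify that the hypothesis $x \ge 6$ guarantees this: since $\log 6 = \log 2 + \log 3 > 1.79$, we get $9\log^2 6 > 9\cdot 3.2 > 28$, and as $x \mapsto \log^2 x$ is increasing, $\log^2 x > \frac{28}{9}$ holds for every $x \ge 6$, which completes the argument; strictness is preserved throughout since we used the strict Mertens bounds. The only real obstacle is that this last numerical check is genuinely tight — $9\log^2 6 \approx 28.9$ against $28$ — so the hypothesis $x \ge 6$ is essentially optimal for this method, and one must keep track of the fact that the correction term in the upper bound is $\frac{1}{2\log^2 x^3} = \frac{1}{18\log^2 x}$ (evaluated at $x^3$, not at $x$), which is what makes the inequality go through.
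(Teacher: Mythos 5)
Your proof is correct and follows essentially the same route as the paper: write the product over $(x,x^3]$ as the quotient $\prod_{p\le x^3}(1-1/p)\big/\prod_{p\le x}(1-1/p)$, apply the Rosser--Schoenfeld bounds to numerator and denominator, and reduce to the inequality $\log^2 x>\tfrac{28}{9}$, which is exactly the paper's threshold $x>e^{\sqrt{3(1+1/27)}}\approx 5.84$. Your numerical verification at $x=6$ is also sound, so there is nothing to add.
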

\begin{proof}
Thanks to Proposition \ref{prop:thirdmertens}, for each $r \ge 2$ there exists $c_r \in (-1,\nicefrac{1}{2})$ such that 
$$
\prod_
{p\le r}
\left( 1 - \frac{1}{p} \right)
=\frac{e^{-\gamma}}{\log x} \left(1+ \frac{c_r}{\log^2 x} \right).
$$
At this point, fix $x\ge 6$. It follows that 
\begin{displaymath}
\begin{split}
\prod_
{x<p\le x^3}
\left( 1 - \frac{1}{p} \right)
&=\frac
{\prod_{p \leq x^3} \left( 1 - \frac{1}{p} \right)}
{\prod_{p \leq x} \left( 1 - \frac{1}{p} \right)}\\
&=\frac{e^{-\gamma}/(\log x^3)}{e^{-\gamma}/\log x}\cdot  \frac{(1+c_{x^3}/(\log x^3)^2)}{1+c_x/(\log x)^2}\\
& < \frac{1}{3}\cdot \frac{1+0.5/ (9\log^2 x)}{1-1/(\log^2 x)} < \frac{1}{2}.
\end{split}
\end{displaymath}
Indeed, the last inequality is equivalent to 
$$
\frac{2}{3} \left(1+\frac{1}{18 \log^2 x}\right)< 1-\frac{1}{\log^2 x},
$$
which holds if and only if 
$$
x> 
e^{\sqrt{3\left(1+\frac{1}{27}\right)}}.
$$
The conclusion follows since the value of the right hand side above is smaller than $6$. 
\end{proof}

\begin{proof}
[Proof of Theorem \ref{thm:main2}]
If $\max\{x_1,x_2\} \le 2$ and $k=0$ then 
$x_n=2$ for all $n\ge 3$, so the claimed inequality \eqref{eq:eq:claimthm2} holds since $4^{X^{3^{k+1}}}\ge 4 > x_n$ for all $n \in \mathbb{N}$. 

Hence, suppose hereafter that $\max\{x_1,x_2\} \ge 3$ or $k \ge 1$, and note that $X=\frac{1}{2}(3x_1+5x_2+7k)\ge 6$. We obtain that $\min\{x_3,x_4\}\ge 3$; hence, $x_n$ is even for all $n\ge 5$. In addition, since $\max\{x_1,\ldots,x_6\}\le 2X$ and 
$$
4^{X^{3^{k+1}}} \ge 4^X=2^{2X}>2X,
$$
it follows that the claimed inequality holds for all $n\le 6$. 

Let $(p_n)_{n\ge 1}$ be the increasing enumeration of the primes greater than $X$. 
Considering that 
$\prod_{i=1}^n\left(1-\frac{1}{p_i}\right)$ 
converges to zero as $n\to \infty$ by Proposition \ref{prop:thirdmertens}, 
one can find integers 
$1 =: r_0 < r_1 < \cdots < r_{k+1}$ such that 
\begin{equation}\label{eq:doublebounds}
\prod_{i=r_j}^{r_{j+1} - 1} \left(1 - \frac{1}{p_i}\right) < \frac{1}{2} <\prod_{i=r_j}^{r_{j+1} - 2} \left(1 - \frac{1}{p_i}\right)
\quad \text{ for all }j \in \{0, 1, \ldots, k\}.
\end{equation}
Define also 
$$
q_j:=\prod_{i=r_j}^{r_{j+1} - 1}p_i
\quad \text{ for all }j \in \{0, 1, \ldots, k\}.
$$
Since $\{q_0,q_1,\ldots,q_k\}$ are pairwise coprime, 
the Chinese remainder theorem yields the existence of some $y \in \mathbb{N}$ such that $y\equiv j\bmod{q_j}$ for all $j \in \{0,1,\ldots,k\}$. In particular
\begin{equation}\label{boundy}
y \ge q_0 \ge p_{r_0}>X.
\end{equation}

Let us suppose for the sake of contradiction that $(x_n)_{n\ge 1}$ is not bounded. Then, there exists a minimal $v \in \mathbb{N}$ such that $x_v \ge 2y$. Since $\max\{x_1,\ldots,x_6\} \le 2X$, it follows by \eqref{boundy} that $v\ge 7$. In particular, $x_{v-1}$ and $x_{v-2}$ are even.

\begin{claim}\label{claim:upperbound}
$\max\{\varphi(x_{v-1}),\varphi(x_{v-2})\} < y-k$.
\end{claim}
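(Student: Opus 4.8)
The plan is to lean on the two facts established just before the claim: every $x_n$ with $n\ge 5$ is even, and the minimality of $v$ (together with $v\ge 7$) forces $x_{v-1},x_{v-2}<2y$. Write $x_{v-1}=2a$ and $x_{v-2}=2b$ with $a,b\in\mathbb N$, so that $a,b<y$. The first step is the elementary bound: an even integer $n$ satisfies $\varphi(n)\le n/2$, hence $\varphi(x_{v-1})=\varphi(2a)\le a$ and $\varphi(x_{v-2})=\varphi(2b)\le b$. Thus if $a\le y-k-1$ we are already done for $x_{v-1}$ (and similarly for $b$), and the only remaining case is that $a$, or $b$, lies in the window $\{y-k,y-k+1,\ldots,y-1\}$ — an interval consisting of exactly $k$ consecutive integers.

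The second step disposes of that window, and this is precisely where the construction of $y$ pays off. Suppose $a\in\{y-k,\ldots,y-1\}$ and write $a=y-j$ with $j\in\{1,\ldots,k\}$. Since $y\equiv j\bmod{q_j}$ by construction, the number $q_j=\prod_{i=r_j}^{r_{j+1}-1}p_i$ divides $a$, hence divides $2a$ as well. As every $p_i$ exceeds $X\ge 6$, the integer $2a$ is divisible by the pairwise distinct primes $2,p_{r_j},p_{r_j+1},\ldots,p_{r_{j+1}-1}$, and therefore
\[
\varphi(2a)=2a\prod_{p\mid 2a}\Bigl(1-\tfrac1p\Bigr)\ \le\ 2a\cdot\tfrac12\prod_{i=r_j}^{r_{j+1}-1}\Bigl(1-\tfrac1{p_i}\Bigr)\ <\ \frac a2\ <\ \frac y2,
\]
the middle strict inequality being the left half of \eqref{eq:doublebounds} and the last using $a<y$. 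Since $y>X\ge 4+\frac{7k}{2}>2k$, by \eqref{boundy} and the definition of $X$, we get $y/2<y-k$, so $\varphi(x_{v-1})<y-k$. The identical argument with $b$ in place of $a$ gives $\varphi(x_{v-2})<y-k$, and the claim follows. (This will then yield $x_v=\varphi(x_{v-1})+\varphi(x_{v-2})+k<2(y-k)+k\le 2y$, contradicting $x_v\ge 2y$.)

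I expect no real obstacle here: the whole argument is a few lines. The one genuinely clever point — more an observation than a difficulty — is that the set of $a$ on which the cheap bound $\varphi(2a)\le a$ is too weak is an interval of length $k$, which matches exactly the $k+1$ residues $0,1,\ldots,k$ to which the Chinese remainder theorem was applied when $y$ was built, so that every bad value of $a$ automatically inherits a whole block $p_{r_j},\ldots,p_{r_{j+1}-1}$ of prime factors. The only routine checks are that $2$ is not one of the $p_i$ (clear, since $p_i>X\ge 6$), so that the factor $1-\tfrac12$ really compounds with $\prod_{i=r_j}^{r_{j+1}-1}(1-\tfrac1{p_i})<\tfrac12$, and that $y>2k$, which is immediate from $y\ge q_0>X$.
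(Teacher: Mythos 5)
Your proof is correct and follows essentially the same route as the paper's: the cheap bound $\varphi(n)\le n/2$ for even $n$ handles $x_{v-1}<2y-2k$, and for the remaining window $x_{v-1}=2(y-j)$ with $j\in\{1,\dots,k\}$ the congruence $y\equiv j\bmod{q_j}$ forces the prime block $p_{r_j},\dots,p_{r_{j+1}-1}$ into $x_{v-1}$, so that \eqref{eq:doublebounds} and $y>2k$ finish the argument. The only cosmetic difference is that you carry the factor $1-\tfrac12$ explicitly rather than restricting the product to odd primes, which yields the same bound.
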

\begin{proof}
If $x_{v-1}<2y-2k$ then, by the fact that $x_{v-1}$ is even, $\varphi(x_{v-1}) \le \frac{1}{2}x_{v-1}<y-k$. 
Otherwise, recalling that $v$ is the minimal integer such that $x_v \ge 2y$, then $2y-2k \le x_{v-1}<2y$. In addition, $x_{v-1}$ is even, so $x_{v-1}=2y-2j$ for some $j \in \{1,\ldots,k\}$. It follows that 
$$
\varphi(x_{v-1})=x_{v-1}  \, {\prod_{p\;\! \mid \;\! x_{v-1}}} \left(1 - \frac{1}{p}\right) = (y-j) {\prod_{p\;\! \mid \;\! x_{v-1}}}^{\!\!\!\!\prime} \left(1 - \frac{1}{p}\right),
$$
where the last product is extended over the odd prime divisors of $x_{v-1}$. Since by construction we have $y\equiv j\bmod{q_j}$, we obtain by \eqref{eq:doublebounds} that
$$
\varphi(x_{v-1}) \le (y-j) \prod_{i=r_j}^{r_{j+1} - 1} \left(1 - \frac{1}{p_i}\right)< \frac{y-j}{2} < y-k.
$$
Note that the last inequality holds because $y>2k$, thanks to \eqref{boundy}. 

The same argument can be repeated for $x_{v-2}$. 
\end{proof}

We conclude by Claim \ref{claim:upperbound} that
$$
2y \le x_v = \varphi(x_{v-1}) + \varphi(x_{v-2}) + k < 2(y-k)+k \le 2y,
$$
which is contradiction. 
It follows that $x_n<2y$ for all $n \in \mathbb{N}$. 

To complete the proof, it will be enough to show that $2y\le 4^{X^{3^{k+1}}}$. For this, define $X_n:=X^{3^n}$ for all $n\ge 0$ and note that, thanks to Corollary \ref{cor:boundprimes}, we have 
$$
\prod_{X_n<p\le X_n^3}\left(1-\frac{1}{p}\right)<\frac{1}{2} 
\quad \text{ for all }n\ge 0.
$$
By the definition of $r_j$, it follows that 
$$
r_j \le X_j \quad \text{ for all }j \in \{0,1,\ldots,k+1\}.
$$
Lastly, since $\prod_{p\le x}p<4^x$ for all $x\ge 1$ (see e.g. \cite[Lemma 2.8]{MR2919246}), we conclude that 
$$
2y \le 2\prod_{j=0}^k q_j
\le \prod_{i=1}^{r_{k+1}}p_i 
\le \prod_{p\le X_{k+1}}p\le 4^{X_{k+1}}.
$$
Therefore $x_n \le 4^{X_{k+1}}$ for all $n \in \mathbb{N}$. 
\end{proof}


\providecommand{\href}[2]{#2}

\end{document}